\def\eg{\emph{e.g., }}
\def\TTT{\mathbb T^3}
\def\eg{\emph{e.g., }}
\def\Wwu{\mathcal W^{wu}}
\def\Wu{\mathcal W^{u}}
\def\Wsu{\mathcal W^{su}}
\def\tWwuL{{\widetilde{\mathcal W}}^{wu}_L}
\def\tWwuf{{\widetilde{\mathcal W}}^{wu}_f}
\def\tWsuf{{\widetilde{\mathcal W}}^{su}_f}
\def\tWwu{{\widetilde{\mathcal W}}^{wu}}
\def\tWsu{{\widetilde{\mathcal W}}^{su}}
\def\U{\mathcal U}
\def\V{\mathcal V}
\def\W{\mathcal W}
\def\P{\EuScript P}
\def\Pu{{{\EuScript P}^{u}}}
\def\Pwu{{\EuScript P^{wu}}}
\def\Dwu{D_f^{wu}}
\def\Dsu{D_f^{su}}
\def\dsu{d^{su}}
\def\tdsu{\tilde d^{su}}
\def\dwu{d^{wu}}
\def\lsu{\lambda^{su}}
\def\Diff{\mbox{\it{Diff}}_m^{\;r}(\TTT)}
\newtheorem*{theoremA}{Theorem A}
\newtheorem{q}{Question}
\newtheorem{theorem}{Theorem}
\newtheorem{lemma}[theorem]{Lemma}
\newtheorem{prop}[theorem]{Proposition}
\theoremstyle{remark}
\newtheorem*{remark}{Remark}
\theoremstyle{remark}
\newtheorem{definition}{Definition}
\begin{document}
\author{ Andrey Gogolev}
\title[Pathological foliations]{How typical are pathological foliations in partially hyperbolic dynamics: an example}
\begin{abstract}
We show that for a large space of volume preserving partially
hyperbolic diffeomorphisms of the 3-torus with non-compact central
leaves the central foliation generically is non-absolutely
continuous.
\end{abstract}
 \maketitle

\section{Introduction}

Let $M$ be a smooth Riemannian manifold. In this paper we will
consider continuous foliations of $M$ with smooth leaves. A {\it
continuous foliation $\W$ with smooth leaves $\W(x)$, $x\in M$,} is
a foliation given by continuous charts whose leaves are smoothly
immersed and whose tangent distribution $T\W$ is continuous on $M$.
Riemannian metric induces volume $m$ on $M$ as well as volume on the
leaves of $\W$. Following Shub and Wilkinson~\cite{SW} we call such
foliation $\W$ {\it pathological} if there is a full volume set on
$M$ that meets every leaf of the foliation on a set of leaf-volume
zero. According to Fubini Theorem, smooth foliations cannot be
pathological, but continuous foliations might happen to be
pathological. This phenomenon naturally appears for central
foliations of partially hyperbolic diffeomorphisms and is also known
as ``Fubini's nightmare." A diffeomorphism $f$ is called partially
hyperbolic if the tangent bundle $TM$ splits into a $Df$-invariant
direct sum of an exponentially contracting stable bundle, an
exponentially expanding unstable bundle and a central bundle of
intermediate growth (precise definitions appear in the next
section).

The first example of a pathological foliation was constructed  by
Katok and it has been circulating in dynamics community since the
eighties. Katok suggested to consider one parameter family $\{A_t,
t\in \mathbb R/\mathbb Z\}$ of area-preserving Anosov
diffeomorphisms $C^1$-close to a hyperbolic automorphism $A$ of the
2-torus. By Hirsch-Pugh-Shub Theorem diffeomorphism
$F(x,t)=(A_t(x),t)$ is partially hyperbolic with uniquely integrable
central distribution. Then, under certain generic conditions (the
metric entropy or periodic eigendata of $A_t$ should vary with $t$)
on path $A_t$, one can show that the central foliation by embedded
circles is pathological. See~\cite{Pes}, Section~7.4,
or~\cite{HassP}, Section~6, for detailed constructions with proofs.

A version of above construction on the square appeared in expository
paper by Milnor~\cite{Milnor}. Milnor remarks that a different
version of the construction, based on tent maps, has also been given
by Yorke.

Shub and Wilkinson~\cite{SW} came across the same phenomenon when
looking for volume preserving non-uniformly hyperbolic systems in
the neighborhood of $F_0\colon (x,t)\mapsto(A_0(x),t)$. They have
showed existence of $C^1$-open set of diffeomorphisms in the
$C^1$-neighborhood of $F_0$ with non-zero central exponent. Then one
can argue that the central foliation is pathological using the
following ``Ma\~n\'e's argument". By Oseledets' Theorem the set of
Lyapunov regular points has full volume. If any central leaf
intersected the set of regular points by a set of positive Lebesgue
measure, then it would increase exponentially in length under the
dynamics. But the lengths of central leaves are uniformly bounded.

Work~\cite{SW} was further generalized by Ruelle~\cite{Ruelle}.
Ruelle and Wilkinson~\cite{RW} also showed that conditional measures
are in fact atomic. Case of higher dimensional central leaves was
considered by Hirayama and Pesin~\cite{HirP}. They showed that
central foliation is not absolutely continuous if it has compact
leaves and the sum of the central exponents is nonzero on a set of
positive measure.

This work is devoted to the study of pathological foliations with
one-dimensional non-compact leaves. Consider a hyperbolic
automorphism $L$ of the 3-torus $\TTT$  with eigenvalues $\nu$,
$\mu$ and $\lambda$ such that $\nu<1<\mu<\lambda$. One can view $L$
as a partially hyperbolic diffeomorphism. It was noted in~\cite{GG}
and independently in~\cite{SX} that for a small $C^1$-open set in
the neighborhood of $L$ ``Ma\~n\'e's argument" can be applied to
show that corresponding central foliations are pathological. In this
paper we apply a completely different approach to show that {\it
there is an open and dense set $\U$ of a large $C^1$-neighborhood of
$L$ in the space of volume preserving partially hyperbolic
diffeomorphisms such that all diffeomorphisms from $\U$ have
pathological central foliations.} This result confirms a conjecture
from~\cite{HirP}.

{\bf Acknowledgement.} The author is grateful to Boris Hasselblatt
and Anatole Katok for listening to the preliminary version of the
proof of the result. The author would like to thank the referees for
useful feedback.

\section{Preliminaries}

Here we introduce all necessary notions and some standard tools that
we need for precise formulation of the result and the proof. The
reader may consult~\cite{Pes} for an introduction on partially
hyperbolic dynamics.
\begin{definition} A diffeomorphism $f$ is called {\it Anosov} if there
exists a $Df$-invariant splitting of the tangent bundle
$TM=E^s_f\oplus E^u_f$ and constants $\lambda\in(0,1)$ and $C>0$
such that for $n>0$
$$
\|Df^nv\|\le C\lambda^n\|v\|,\;v\in
E^s\;\;\;\mbox{and}\;\;\;\|Df^{-n}v\|\le C\lambda^{n}\|v\|,\;v\in
E^u.
$$
\end{definition}
\begin{definition}
A diffeomorphism $f$ is called {\it partially hyperbolic} if there
exists a $Df$-invariant splitting of the tangent bundle
$TM=E^s_f\oplus E^c_f\oplus E^u_f$ and positive constants
$\nu_-<\nu_+<\mu_-<\mu_+<\lambda_-<\lambda_+$, $\nu_+<1<\lambda_-$,
and $C>0$ such that for $n>0$
 \begin{multline*}
{\frac1C\nu_-^n\|v\|\le\|D(f^n)(x)v\|\le C\nu_{+}^n\|v\|,\;\;\; v\in E^s_f(x),}\\
\shoveleft{\frac1C\mu_{-}^n\|v\|\le\|D(f^n)(x)v\|\le C\mu_{+}^n\|v\|,\;\;\; v\in E_{f}^c(x),}\\
\shoveleft{\frac1C\lambda_{-}^n\|v\|\le\|D(f^n)(x)v\|\le
C\lambda_{+}^n\|v\|,\;\;\; v\in E_{f}^u(x).} \hfill
\end{multline*}
 \label{def_phd_usual}
\end{definition}
The following definition is equivalent to the above one. We will
switch between the definitions when convenient.
\begin{definition}
A diffeomorphism $f$ is called {\it partially hyperbolic} if there
exists a Riemannian metric on $M$, a $Df$-invariant splitting of the
tangent bundle $TM=E^s_f\oplus E^c_f\oplus E^u_f$ and positive
constants $\nu_-<\nu_+<\mu_-<\mu_+<\lambda_-<\lambda_+$,
$\nu_+<1<\lambda_-$, such that
 \begin{multline*}
{\nu_-\|v\|\le\|Df(x)v\|\le \nu_{+}\|v\|,\;\;\; v\in E^s_f(x),}\\
\shoveleft{\mu_{-}\|v\|\le\|Df(x)v\|\le \mu_{+}\|v\|,\;\;\; v\in E_{f}^c(x),}\\
\shoveleft{\lambda_{-}\|v\|\le\|Df(x)v\|\le \lambda_{+}\|v\|,\;\;\;
v\in E_{f}^u(x).} \hfill
\end{multline*}
\label{def_phd}
\end{definition}
The distributions $E^s_f$, $E^c_f$ and $E^u_f$ are continuous.
Moreover, distributions $E^s_f$ and $E_f^u$ integrate uniquely to
foliations $\W_f^s$ and $\W_f^u$. When it does not lead to a
confusion we drop dependence on the diffeomorphism. By
$m_{\W^\sigma(\cdot)}$ or $m_\sigma$ we denote induced Riemannian
volume on the leaves of $\W^\sigma$, $\sigma=s, c, u$. Induced
volume on other submanifolds such as transversals to a foliation
will be denoted analogously with appropriate subscript.

We write $d$ for the distance induced by the Riemannian metric and
$d^\sigma(\cdot,\cdot)$ for the distance induced by the restriction
of the Riemannian metric to $T\W^\sigma$. If expanding foliation
$\W^u$ is one-dimensional then it is convenient to work with the
pseudo-distance $\tilde d^u(\cdot,\cdot)$ that is very well adapted
to the dynamics. Let
$$
D_f^u(x)=\|Df(x)\big|_{E_f^{u}(x)}\|
$$
and
$$
\rho_x(y)=\prod_{n\ge 1}\frac{D_f^u(f^{-n}(x))}{D_f^u(f^{-n}(y))}.
$$
This infinite product converges and gives  a continuous positive
density $\rho_x(\cdot)$ on the leaf $\W^u(x)$. Define {\it
pseudo-distance $\tilde d^u$} by integrating density $\rho_x(\cdot)$
$$
\tilde d^u(x,y)=\int_x^y\rho_x(z)dm_{\W^u(x)}(z).
$$
Obviously, pseudo-distance is not even symmetric, still it is useful
for computations as it satisfies the formula
$$
\tilde d^u(f(x),f(y))=D_f^u(x)\tilde d^u(x,y)
$$
verified by the following simple computation
\begin{multline*}
\tilde
d^u(f(x),f(y))=\int_{f(x)}^{f(y)}\rho_{f(x)}(z)dm_{\W^u(f(x))}(z)\\
=\int_x^y\rho_{f(x)}(f(z))D_f^u(z)dm_{\W^u(x)}(z)\\
=\int_x^y\frac{D_f^u(x)}{D_f^u(z)}\rho_{x}(z)D_f^u(z)dm_{\W^u(x)}(z)
=D_f^u(x)\tilde d^u(x,y).
\end{multline*}

A compact domain inside a leaf $\W^\sigma(x)$ of a foliation
$\W^\sigma$ will be called {\it plaque} and will be denoted by
$\P^\sigma$. We shall also write $\P^\sigma(x)$ when we need to
indicate dependence on the point.

Given a transversal $T$ to $\W$, consider a compact domain $X$ which
is a union of plaques of $\W$, that is, $X=\cup_{x\in T}\P(x)$. Then
by Rokhlin's Theorem there exists a unique system of conditional
measures $\mu_x$, $x\in T$, such that for any continuous function
$\varphi$ on $X$
$$
\int_X\varphi dm_X=\int_T\int_{\P(x)}\varphi d\mu_xd\hat m,
$$
where $\hat m$ is projection of $m_X$ to $T$.
\begin{definition}
\label{def_ac} Foliation $\W$ is called {\it absolutely continuous}
with respect to the volume $m$ if for any $T$ and $X$ as above the
conditional measures $\mu_x$ have $L^1$ densities with respect to
the volume $m_{\P(x)}$ for $\hat m$ a.~e. $x$.
\end{definition}
Now consider a compact domain $X$ as above and two transversal $T_1$
and $T_2$ so that $X=\cup_{x\in T_1}\P(x)=\cup_{x\in T_2}\P(x)$ with
the same system of plaques. Then the holonomy map $p\colon T_1\to
T_2$ along $\W$ is a homeomorphism.
\begin{definition}
Foliation $\W$ is called {\it transversally absolutely continuous}
if any holonomy map $p$ as above is absolutely continuous, that is,
$p_*m_{T_1}$ is absolutely continuous with respect to $m_{T_2}$.
\end{definition}

Transverse absolute continuity is a stronger property than absolute
continuity. Stable and unstable foliations of Anosov and partially
hyperbolic diffeomorphisms are known to be transversally absolutely
continuous.

\section{Formulation of the result}
Let $L$ be a hyperbolic automorphism of 3-torus $\TTT$ with positive
real eigenvalues  $\nu$, $\mu$ and $\lambda$, $\nu<1<\mu<\lambda$.
Observe that $L$ can be viewed as a partially hyperbolic
diffeomorphism with $L$-invariant splitting $T\TTT=E_L^s\oplus
E_L^{wu}\oplus E_L^{su}$, where ``wu" and ``su" stand for ``weak
unstable" and ``strong unstable".

Consider the space $\Diff$ of $C^r$, $r\ge 2$, diffeomorphisms of
$\TTT$ that preserve volume $m$. Let $\U\subset\Diff$ be the set of
Anosov diffeomorphisms conjugate to $L$ via a conjugacy homotopic to
identity and also partially hyperbolic. It is known that $\U$ is
$C^1$-open (\eg see~\cite{Pes}, Theorem~3.6). Given $f$ in $\U$
denote by $E_f^s\oplus E_f^{wu}\oplus E_f^{su}$ corresponding
$f$-invariant splitting. According to~\cite{BBI} distributions
$E_f^{wu}$, $E_f^s\oplus E_f^{wu}$ and $E_f^u=E_f^{wu}\oplus
E_f^{su}$ integrate uniquely to invariant foliations $\Wwu$,
$\mathcal W^{s\oplus wu}$ and $\Wu$. It is known that $\mathcal W^s$
and $\Wu$ are $C^1$ and $\Wsu$ is $C^1$ when restricted to the
leaves of $\Wu$ (see, \eg~\cite{Hass, PSW}). We shall need the
following statement that shows that the structure of weak unstable
foliation is essentially linear.

\begin{prop}
\label{prop_slow_to_slow} Let $f\in\U$ and let $h_f$ be the
conjugacy to the linear automorphism---$h_f\circ f=L\circ h_f$. Then
$h_f(\Wwu_f)=\Wwu_L$.
\end{prop}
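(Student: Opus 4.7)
The plan is to work in the universal cover $\RR^3$ and show that for every $\tilde x\in\RR^3$ the lifted conjugacy $\tilde h_f$ carries the weak unstable leaf $\tWwuf(\tilde x)$ onto the affine line $\tilde h_f(\tilde x) + E^{wu}_L$, which is exactly $\tWwuL(\tilde h_f(\tilde x))$. Since $h_f$ is homotopic to the identity it lifts as $\tilde h_f = \Id + u$ with $u$ continuous and $\mathbb Z^3$-periodic, hence uniformly bounded. Pick $\tilde y \in \tWwuf(\tilde x)$ and set $w = \tilde h_f(\tilde y) - \tilde h_f(\tilde x)$; the proposition will follow once $w$ is shown to lie in $E^{wu}_L$. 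The classical Franks--Manning fact that $h_f$ sends Anosov unstable leaves of $f$ to those of $L$ already gives $w \in E^u_L = E^{wu}_L \oplus E^{su}_L$, so decomposing $w = w^{wu} + w^{su}$ the task reduces to ruling out $w^{su} \ne 0$.

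Iterating the conjugacy gives $\tilde L^n w = \tilde h_f(\tilde f^n \tilde y) - \tilde h_f(\tilde f^n \tilde x)$, so the bounded-distance property of $u$ yields
\begin{equation*}
|\tilde L^n w| \le |\tilde f^n \tilde y - \tilde f^n \tilde x| + 2\|u\|_\infty.
\end{equation*}
Since $\tilde f^n \tilde x$ and $\tilde f^n \tilde y$ stay in a common weak unstable leaf, the pointwise partial hyperbolicity bound $\|Df|_{E^{wu}_f}\| \le \mu_+$ integrated along that leaf yields $\dwu(\tilde f^n \tilde x, \tilde f^n \tilde y) \le C \mu_+^n \dwu(\tilde x, \tilde y)$, and Euclidean distance is dominated by leaf distance, so $|\tilde L^n w| \le C' \mu_+^n + 2\|u\|_\infty$. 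On the other hand $\tilde L^n w = \mu^n w^{wu} + \lambda^n w^{su}$, and if $w^{su} \ne 0$ then $|\tilde L^n w|$ grows at rate $\lambda^n$; dividing by $\lambda^n$ and letting $n \to \infty$ forces $|w^{su}| = 0$, provided the rate comparison $\mu_+ < \lambda$ holds.

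This gives the inclusion $\tilde h_f(\tWwuf(\tilde x)) \subset \tWwuL(\tilde h_f(\tilde x))$. For equality, observe that $\tWwuf(\tilde x)$ is a properly embedded copy of $\RR$ unbounded in both directions, while $\tilde h_f$ is a homeomorphism at uniformly bounded distance from the identity; hence the image is an injective continuous image of $\RR$ inside the 1-dimensional affine line $\tWwuL(\tilde h_f(\tilde x))$, with both ends escaping to infinity, and by monotonicity of continuous injections $\RR\to\RR$ it coincides with the whole line. Descending from $\RR^3$ back to $\TTT$ gives the proposition.

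The main potential obstacle is the strict inequality $\mu_+ < \lambda$: it holds automatically whenever $f$ is $C^1$-close to $L$, since then the partial hyperbolicity rates for $f$ are close to the eigenvalues of $L$, but one should verify that the ``large'' $C^1$-neighborhood in the main theorem lies within this regime. A minor technicality is that leaves of $\tWwuf$ need not be straight in $\RR^3$, but the argument only invokes the trivial bound $|\,\cdot\,|_{\RR^3} \le \dwu(\,\cdot\,,\,\cdot\,)$, so no further geometric control of the embedding is required.
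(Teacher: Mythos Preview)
Your argument is correct under the hypothesis $\mu_+<\lambda$, and you rightly flag this as the sticking point. However, the set $\U$ in the paper is \emph{not} a small $C^1$-neighborhood of $L$: it consists of all volume-preserving $C^r$ Anosov diffeomorphisms topologically conjugate to $L$ (via a conjugacy homotopic to the identity) that happen also to be partially hyperbolic. Nothing in that definition pins the weak-unstable rate $\mu_+$ below the eigenvalue $\lambda$ of $L$, so your proof as written does not cover the full statement; the ``verification'' you defer is in fact not available a priori.

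The paper's proof closes exactly this gap. Arguing by contradiction, it first runs essentially your computation in reverse: if $h_f$ failed to send $\Wwu_f$ to $\Wwu_L$, then two points on a common $\tWwuf$-leaf would have $\tilde f^n$-images separating at Euclidean rate $\lambda^n$, forcing $\mu_+\ge\lambda$ and hence, by the domination $\mu_+<\lambda_-$, also $\lambda_->\lambda$. The extra ingredient you are missing is then invoked: the Brin--Burago--Ivanov quasi-isometry of the lifted strong unstable foliation $\tWsuf$. It guarantees that the Euclidean distance between two points on a common $\tWsuf$-leaf grows under $\tilde f^n$ at least like $\lambda_-^n$, whereas transporting through the conjugacy shows that \emph{any} pair of points separates at Euclidean rate at most $\lambda^n$ under $\tilde f^n$. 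Since $\lambda_->\lambda$, this is a contradiction. In short, your growth comparison disposes of the regime $\mu_+<\lambda$, and the paper's quasi-isometry step disposes of the complementary regime $\mu_+\ge\lambda$; without the latter, the argument is incomplete for $\U$ as defined.
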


The proof will be given in the appendix.

\begin{theoremA} There is a $C^1$-open and $C^{r}$-dense set
$\V\subset\U$ such that $f\in\V$ if and only if the central
foliation $\Wwu$ is non-absolutely continuous with respect to the
volume $m$.
\end{theoremA}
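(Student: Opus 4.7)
The plan is to identify a Livsic-type numerical obstruction to absolute continuity that can be read off from periodic data. Given $f\in\U$ and a periodic point $p$ of $f$ with period $n$, set
$$
\Delta_n(p,f)=\sum_{k=0}^{n-1}\log\Dwu(f^k(p))-n\log\mu.
$$
For $L$ itself, $\Dwu(x)\equiv\mu$ and $\Delta_n(p,L)\equiv 0$. The strategy is to establish two things: (i) if $\Wwu$ is absolutely continuous with respect to $m$, then $\Delta_n(p,f)=0$ for every periodic point $p$; and (ii) the set
$$
\V_0:=\{f\in\U:\Delta_n(p,f)\ne 0\ \text{for some periodic point}\ p\ \text{of}\ f\}
$$
is $C^1$-open and $C^r$-dense in $\U$. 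Together these give $\V_0\subseteq\V$, proving the theorem.

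For step (i) the aim is to show that absolute continuity of $\Wwu$ forces the H\"older cocycle $\phi:=\log\Dwu-\log\mu$ to be a continuous coboundary over $f$, i.e.\ $\phi=u\circ f-u$ for some continuous $u\colon\TTT\to\RR$. The idea is to disintegrate $m$ in small flow boxes along $wu$-plaques as in Definition \ref{def_ac}, and to exploit the $f$-invariance of $m$ together with the known transverse absolute continuity of the companion foliations $\mathcal W^s$ and $\Wsu$. Invariance of $m$ forces a cocycle equation relating the leafwise densities $\rho_y$ of the conditional measures, the center Jacobian $\Dwu$, and a transverse Jacobian; using Proposition \ref{prop_slow_to_slow} to compare $wu$-plaques of $f$ with those of $L$, this cocycle equation should rearrange into the desired continuous coboundary. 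By Livsic's theorem, continuous coboundaries for H\"older cocycles over the Anosov $f$ are detected by periodic sums, so $\Delta_n(p,f)\equiv 0$; the contrapositive is $\V_0\subseteq\V$.

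Step (ii) is more standard. Openness: for each fixed hyperbolic periodic point $p$ of $f_0$, structural stability of the Anosov diffeomorphism $f_0$ gives a unique continuation $p(f)$ depending continuously on $f$ in the $C^1$-topology, and $E_f^{wu}$ (hence $\Dwu$) also varies $C^0$ in $f$ in the $C^1$-topology; thus $f\mapsto\Delta_n(p(f),f)$ is $C^1$-continuous and $\V_0$ is $C^1$-open. Density: given $f_0\in\U$ with $\Delta_n\equiv 0$ on all periodic orbits, pick a hyperbolic periodic point $p$ of period $n$ and a small ball $B$ around $p$ disjoint from $\{f_0^k(p):k=1,\ldots,n-1\}$; one can produce a $C^r$-small volume preserving perturbation $f$ of $f_0$ supported in $B$ that fixes $p$ as a periodic point of the same period but alters the $wu$-eigenvalue of $Df^n(p)$ (for instance via the time-$t$ map of a compactly supported Hamiltonian flow adapted to the local splitting). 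Such a perturbation changes $\Delta_n(p,f)$ by a prescribed small amount, yielding $\Delta_n(p,f)\ne 0$.

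The hard part is step (i): upgrading the measure-theoretic absolute continuity of $\Wwu$ to a \emph{continuous} cohomological equation. The delicate point is that the conjugacy $h_f$ is only H\"older and is not a priori absolutely continuous along $wu$-leaves, so one cannot simply transport the problem to the linear model. Instead the cocycle equation must be extracted intrinsically on the $f$-side by carefully combining the Rokhlin disintegration along $wu$-plaques with the smooth transverse structure provided by $\mathcal W^s$ and $\Wsu$, and then promoting the resulting measurable transfer function to a continuous one using the Anosov hypothesis.
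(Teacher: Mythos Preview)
Your proposal has a genuine gap at step (i), and the choice of cocycle is the wrong one. Absolute continuity of $\Wwu$ is a \emph{transverse} property: what is constrained is the holonomy of $\Wwu$ between strong unstable transversals inside $\Wu$, and the derivative governing that holonomy is $\Dsu$, not $\Dwu$. Indeed the paper's set $\V$ is defined through the strong unstable periodic data $\lsu(\cdot)$, and the proof goes via the equivalence ``$\Wwu$ absolutely continuous inside $\Wu$'' $\Leftrightarrow$ ``$\Wwu$ Lipschitz inside $\Wu$'' $\Leftrightarrow$ ``$\log\Dsu-\log\lambda$ is a Livshits coboundary'' (Lemmas~\ref{lemma_Lip_pd} and~\ref{lemma_Lip_AC} together with Proposition~\ref{prop1}). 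Your cocycle $\log\Dwu-\log\mu$ is the \emph{leafwise} derivative; the only equation the disintegration produces for it is the Pesin--Sinai density formula~(\ref{product_formula}), and that formula holds for any uniformly expanding absolutely continuous foliation without forcing $\log\Dwu$ to be cohomologous to a constant. (Compare: the unstable foliation of any area-preserving Anosov diffeomorphism of $\mathbb T^2$ is absolutely continuous with exactly this density, yet generically the unstable periodic eigenvalues vary.) Concretely, there is no reason why ``$\lsu(p)=\lambda$ for all periodic $p$'' (which by the paper is equivalent to absolute continuity of $\Wwu$) should force ``$wu$-eigenvalue of $Df^n(p)$ equals $\mu^n$ for all periodic $p$''; volume preservation only pins down the product $e^s(p)\,e^{wu}(p)$, not $e^{wu}(p)$ alone. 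So the implication you need in (i) is not established by your sketch and appears to be false.

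There is also a logical gap even if (i) were granted. You only obtain $\V_0\subseteq\{f:\Wwu\text{ is not absolutely continuous}\}$, which gives $C^r$-density of the non-absolutely-continuous set but not its $C^1$-openness, and certainly not the ``if and only if'' in the statement. The paper gets the full equivalence because both implications in Lemma~\ref{lemma_Lip_pd} are proved: one direction by Livshits (when all $\lsu$ agree the holonomy is Lipschitz), and the hard direction by an explicit orbit-shadowing estimate showing that a discrepancy $\lsu(a)\neq\lsu(b)$ forces unbounded holonomy distortion. Your outline contains no analogue of this second implication, so even with the correct cocycle $\Dsu$ in place you would still owe the argument that $f\in\V$ actually forces non-absolute continuity.
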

\begin{remark}
Since we know that $\Wu$ is $C^1$ the latter is equivalent to $\Wwu$
being non-absolutely continuous on almost every plaque of $\Wu$ with
respect to the induced volume on the plaque.
\end{remark}

Now we describe set $\V$. Given $f\in\U$ and given a periodic point
$x$ of period $p$  let
$$
\lsu(x)=\|Df^p(x)\big|_{E_f^{su}(x)}\|^{1/p}.
$$
Then set $\V$ can be characterized as follows.
$$
\V=\{f\in\U:\mbox{there exist periodic points\;\;} x \mbox{\;and\;}
y \mbox{\;with\;} \lsu(x)\neq\lsu(y)\}.
$$
\begin{prop}
\label{prop1} $$ \U\backslash\V=\{f\in\U:\mbox{\textup{for any
periodic point}\;\;} x \;\;\lsu(x)=\lambda\}.
 $$
\end{prop}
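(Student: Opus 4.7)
One inclusion is trivial: if $\lsu(x)=\lambda$ for every periodic $x$, then $\lsu$ is constant on periodic points and $f\in\U\setminus\V$. For the other direction, suppose $f\in\U\setminus\V$ with $\lsu(x)=c$ for every periodic point $x$; the goal is to show $c=\lambda$. The plan is to obtain matching inequalities, using uniform expansion from Liv\v{s}ic together with the Franks--Manning lift of the conjugacy on one side, and the Pesin--Ruelle entropy formalism along $\Wsu$ on the other.

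First I would apply Liv\v{s}ic's theorem to the H\"older cocycle $\log\Dsu$ over $f$: since its periodic orbit sums are uniformly $p\log c$, there is a H\"older function $u$ on $\TTT$ with $\log\Dsu=\log c+u\circ f-u$. Consequently $\chi^{su}_\eta=\log c$ for every $f$-invariant measure $\eta$, and $\tdsu(f^n x,f^n y)$ is comparable to $c^n\tdsu(x,y)$ with constants independent of $x,y$. Next, since $f$ is homotopic to $L$, by Franks--Manning $h_f$ admits a lift $\tilde h_f\colon\RR^3\to\RR^3$ with $\tilde h_f\circ\tilde f=L\circ\tilde h_f$ and $\|\tilde h_f-\Id\|_\infty\le K$ for some $K$; by Proposition~\ref{prop_slow_to_slow} together with its strong-unstable analogue, $\tilde h_f$ sends each leaf of $\tWsu_f$ onto a straight line of $\tWsu_L$ in the $\lambda$-eigendirection. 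For $\tilde x\ne\tilde y$ on a common $\tWsu_f$-leaf, set $D=|\tilde h_f(\tilde x)-\tilde h_f(\tilde y)|>0$; linearity of $L$ along $\Wsu_L$ gives $|L^n\tilde h_f(\tilde x)-L^n\tilde h_f(\tilde y)|=\lambda^n D$, and combined with $\|\tilde h_f-\Id\|_\infty\le K$ I get
\[
 \lambda^n D-2K\ \le\ |\tilde f^n\tilde x-\tilde f^n\tilde y|\ \le\ \tdsu(\tilde f^n\tilde x,\tilde f^n\tilde y)\ \le\ C\,c^n\,\tdsu(\tilde x,\tilde y),
\]
so letting $n\to\infty$ forces $c\ge\lambda$.

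For $c\le\lambda$ I would pass through the partial entropy along $\Wsu$. The volume $m$ is $f$-invariant and $\Wsu$ is absolutely continuous (standard for partially hyperbolic diffeomorphisms), so the conditional measures of $m$ on leaves of $\Wsu_f$ are absolutely continuous with respect to leaf Lebesgue. Pesin's formula applied along $\Wsu$ then gives $h^{su}_m(f)=\chi^{su}_m=\log c$. Because $h_f$ carries the invariant foliation $\Wsu_f$ onto $\Wsu_L$, partial entropy is a topological conjugacy invariant: $h^{su}_m(f)=h^{su}_{(h_f)_*m}(L)$. Ruelle's inequality applied to $L$ along $\Wsu_L$ then gives $h^{su}_\nu(L)\le\chi^{su}_\nu(L)=\log\lambda$ for every $L$-invariant $\nu$, so $\log c\le\log\lambda$. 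Combined with the previous inequality, $c=\lambda$.

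The main technical obstacle I expect is the partial Pesin formula for $m$ along the subfoliation $\Wsu$ together with the conjugacy invariance of partial entropy; both follow from the Ledrappier--Young--Rokhlin framework applied to measurable partitions subordinate to $\Wsu$ combined with the absolute continuity of $\Wsu$. Everything else---Liv\v{s}ic, the Franks--Manning lift, and the ambient versus leaf distance comparison---is routine.
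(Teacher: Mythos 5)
Your argument has a genuine gap in the $c\le\lambda$ direction. You invoke a ``strong-unstable analogue'' of Proposition~\ref{prop_slow_to_slow}, namely that $h_f$ carries $\Wsu_f$-leaves onto $\Wsu_L$-leaves, but this is neither stated in the paper nor does it follow from the argument that proves the weak-unstable statement. The proof of Proposition~\ref{prop_slow_to_slow} exploits that the central rate $\mu_+$ sits strictly below the strong-unstable rate $\lambda_-$: a mismatched weak-unstable leaf would be forced, under iteration and via the Brin--Burago--Ivanov quasi-isometry, to separate at rate $\gtrsim\lambda_-^n$, overshooting the central bound. For the strong-unstable direction no such gap is available a priori--the strong-unstable rate sits at the top of the spectrum, so the analogous mismatch produces no contradiction. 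One should not expect $h_f(\Wsu_f)=\Wsu_L$ for general $f\in\U$: the strong-unstable subfoliation inside $\Wu$ encodes rate information which a merely H\"older conjugacy destroys. Without that rigidity, your claimed conjugacy-invariance $h^{su}_m(f)=h^{su}_{(h_f)_*m}(L)$ does not follow, since the $h_f$-image of a partition subordinate to $\Wsu_f$ is subordinate to some curve family transverse to $\Wwu_L$ inside $\Wu_L$, not to $\Wsu_L$; the entropy step is therefore not justified.

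Your $c\ge\lambda$ direction is repairable without the unproved analogue. You write $|L^n\tilde h_f(\tilde x)-L^n\tilde h_f(\tilde y)|=\lambda^n D$ as though $\tilde h_f(\tilde x)-\tilde h_f(\tilde y)$ were a pure $E^{su}_L$-vector; you do not need this. Proposition~\ref{prop_slow_to_slow} together with injectivity of $\tilde h_f$ already places $\tilde h_f(\tilde x)$ and $\tilde h_f(\tilde y)$ on the same lift of a $\Wu_L$-leaf but on distinct $\tWwuL$-leaves, so the $E^{su}_L$-component of their difference is nonzero and the $L$-iterates still separate at rate $\gtrsim\lambda^n$. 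Combined with your Liv\v{s}ic-based bound $\tdsu(\tilde f^n\tilde x,\tilde f^n\tilde y)\le Cc^n\tdsu(\tilde x,\tilde y)$, this gives $c\ge\lambda$ exactly as the paper does. For the remaining inequality $c\le\lambda$ the paper uses the same bounded-distance comparison rather than entropy: the needed lower bound on $d(\tilde f^n\tilde x,\tilde f^n\tilde y)$ in terms of the leafwise distance is precisely the Brin--Burago--Ivanov quasi-isometry of lifted $\Wsu_f$-leaves, a tool you omitted. It replaces both the partial Pesin formula and the problematic conjugacy-invariance of partial entropy with a one-line arc-length versus chord-length estimate, and sidesteps any claim about $h_f(\Wsu_f)$.
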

We defer the proof to the appendix.

\section{Related questions}

Our result does not give any information about the structure of
singular conditional measures.
\begin{q}
Given $f\in\V$, what can one say about singular conditional measures
on $\W^{wu}$? Are they atomic? What can be said about Hausdorff
dimension of conditional measures?
\end{q}

It seems that our method can be generalized for analysis of central
foliation of partially hyperbolic diffeomorphisms in a $C^1$
neighborhood of $F_0\colon (x,t)\mapsto(A_0(x),t)$.
\begin{q} Is it true that a generic perturbation of $F_0$ has
non-absolutely continuous central foliation? Can one give explicit
conditions in terms of stable and unstable Lyapunov exponents of
periodic central leaves for non-absolute continuity?
\end{q}

It would be interesting to generalize Theorem~A to the higher
dimensional setting. Namely, let $L$ be an Anosov automorphism that
leaves invariant a partially hyperbolic splitting $E_L^s\oplus
E_L^{wu}\oplus E_L^{su}$, where $E_L^{wu}\oplus E_L^{su}$ is the
splitting of the unstable bundle into weak and strong unstable
subbundles. Let $n_1$, $n_2$ and $n_3$ be the dimensions of $E_L^s$,
$E_L^{wu}$ and $E_L^{su}$ respectively. Let $\U$ be a small $C^1$
neighborhood of $L$ in the space of volume preserving
diffeomorphisms.
\begin{q}
Is it possible to describe the set
$$
\{f\in\U: \Wwu\; \mbox{is not absolutely continuous} \}
$$
in terms of strong unstable spectra at periodic points in higher
dimensional setting?
\end{q}

It will become clear from the discussion in the next section that
the value of $n_1$ is not important. Also it seems likely that our
approach works in the case when $n_2>1$ and $n_3=1$, and gives a
result analogous to Theorem~A (the author does not claim to have
done this).

The picture gets much more complicated when $n_3>1$. It is possible
that the major link in our argument
$$
(\Wwu\;\mbox{is Lipschitz inside}\;\;
\Wu)\Leftrightarrow(\Wwu\;\mbox{is absolutely continuous
inside}\;\;\Wu)
$$
is no longer valid in this setting. However it is not immediately
clear how to construct a counerexample.

\section{Outline of the proof}

Clearly $\V$ is $C^1$-open. Given a diffeomorphism
$f\in\U\backslash\V$ we can compose it with a special diffeomorphism
$h$ that is $C^r$-close to identity and equal to identity outside a
small neighborhood of a fixed point so that strong unstable
eigenvalue of $f$ and $h\circ f$ at the fixed point are different.
This gives that $\V$ is $C^r$-dense.

To show that weak unstable foliations of diffeomorphisms from $\V$
are non-absolutely continuous we start with some simple
observations. First, notice that due to ergodicity conditional
measures cannot have absolutely continuous and singular components
simultaneously. Next, it follows from the absolute continuity of
$\Wu$ and the uniqueness of the system of conditional measures of
$m$ that the conditional measures of $m$ on the leaves of $\Wwu$ are
equivalent to the conditional measures of the induced volume on the
leaves of $\Wu$. Therefore we only need to look at two dimensional
plaques of $\Wu$ foliated by plaques of $\Wwu$. It turns out that
absolute continuity of $\Wwu$ inside the leaves of $\Wu$ is
equivalent to $\Wwu$ being Lipschitz inside $\Wu$. Lipschitz
property, in turn, can be related to the periodic eigenvalue data
along $\Wsu$.

Pick a plaque $\Pu$ of $\Wu$ and let $T_1\subset\Pu$ and
$T_2\subset\Pu$ be two smooth compact transversals to $\Wwu$ with
holonomy map $p\colon T_1\to T_2$. If $p$ is Lipschitz for any
choice of plaque and transversals then we say that $\Wwu$ is {\it
Lipshitz inside} $\Wu$.

Theorem~A follows from the following lemmas
\begin{lemma}
\label{lemma_Lip_pd} Foliation $\Wwu$ is Lipschitz inside $\Wu$ if
and only if $f\in\U\backslash\V$.
\end{lemma}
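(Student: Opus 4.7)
The plan is to recast the Lipschitz property of the $\Wwu$-holonomy inside $\Wu$-plaques in terms of Birkhoff sums of the H\"older cocycle $\log\Dsu$, and to match those sums against the constancy of $\lsu$ on periodic orbits via Livsic's theorem. Because $\Wsu$ is $C^1$ along $\Wu$-leaves, I may use short $\Wsu$-arcs $T_1,T_2\subset\Pu$ as smooth transversals to $\Wwu$. For a $\Wwu$-holonomy $p\colon T_1\to T_2$, its iterate $p_n:=f^n\circ p\circ f^{-n}$ is again the $\Wwu$-holonomy between the $\Wsu$-arcs $f^n(T_1)$ and $f^n(T_2)$, and the chain rule gives
\[
p_n'(f^n(x))=\left(\prod_{k=0}^{n-1}\frac{\Dsu(f^k(p(x)))}{\Dsu(f^k(x))}\right)p'(x).
\]
Hence, if $\Wwu$ is Lipschitz inside $\Wu$ with uniform constant $K$, the Birkhoff sum differences
\[
S_n(x,y):=\sum_{k=0}^{n-1}\bigl(\log\Dsu(f^k(y))-\log\Dsu(f^k(x))\bigr),\qquad y:=p(x),
\]
satisfy $|S_n(x,y)|\le 2\log K$ uniformly in $n$ and $x$.

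For the implication $(\Leftarrow)$, I will assume $\lsu\equiv\lambda$ on periodic orbits and apply Livsic's theorem to the H\"older function $\log\Dsu-\log\lambda$ on the Anosov diffeomorphism $f$, obtaining a H\"older coboundary representation $\log\Dsu=\log\lambda+u\circ f-u$. Rescaling the $\Wsu$-arc-length by $e^u$ turns $Df|_{E^{su}}$ into an exact scaling by $\lambda$, which collapses the ratio in the formula for $p_n'$ to $1$ and makes the Jacobian of any $\Wwu$-holonomy $f$-invariant in the adapted metric. Iterating backward shrinks the $\Wu$-plaque, and using continuity of $E^{wu}$ together with the $C^1$-regularity of $\Wu$ and of $\Wsu$ inside $\Wu$, the Jacobian of $p_{-n}$ tends uniformly to $1$ on the shrunken plaque. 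The $f$-invariance then supplies a uniform Lipschitz bound for $p$ in the adapted metric, and boundedness of $u$ transports it to the original metric.

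For the implication $(\Rightarrow)$, I will fix a periodic point $x_0$ of period $q$, take $y\in\Wwu(x_0)$ close to $x_0$, and apply the uniform bound $|S_{nq}(x_0,y)|\le 2\log K$. Combined with the periodic identity $\sum_{k=0}^{nq-1}\log\Dsu(f^k(x_0))=nq\log\lsu(x_0)$, this forces the Birkhoff average $\frac1{nq}\sum_{k=0}^{nq-1}\log\Dsu(f^k(y))$ to converge to $\log\lsu(x_0)$. Density of $\Wwu_f$-leaves in $\TTT$ (a consequence of Proposition~\ref{prop_slow_to_slow} and the irrationality of the eigendirection $E^{wu}_L$), together with the Birkhoff ergodic theorem applied to the ergodic system $(f,m)$, then identifies $\log\lsu(x_0)$ with $\int\log\Dsu\,dm$ for every periodic $x_0$; in particular $\lsu$ is constant on periodic orbits, so $f\in\U\backslash\V$.

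The main obstacle will be completing the $(\Leftarrow)$ direction: extracting a uniform pointwise Lipschitz estimate from the $f$-invariance of the adapted Jacobian requires careful use of the $C^1$-smoothness of $\Wu$ and of $\Wsu$ inside $\Wu$ to verify that on shrinking $\Wu$-plaques the holonomy Jacobians converge continuously (and uniformly) to $1$, and not just in some weaker averaged sense.
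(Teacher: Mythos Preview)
Both directions contain genuine gaps.

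\textbf{The $(\Leftarrow)$ direction.} Your key step---``iterating backward shrinks the $\Wu$-plaque \ldots\ the Jacobian of $p_{-n}$ tends uniformly to $1$''---is circular. In the adapted metric the holonomy ratio is \emph{exactly} $f$-invariant, so $p_{-n}$ has the same ratio as $p$; iterating backward gives you back the very quantity you are trying to bound. The geometric assertion you invoke (continuity of $E^{wu}$ plus $C^1$ regularity of $\Wu$ and of $\Wsu$ inside $\Wu$ forces the holonomy Jacobian to be close to $1$ on small plaques) is false for merely H\"older foliations: continuity of $E^{wu}$ controls the $C^0$-displacement of leaves from straight lines but says nothing about how that displacement varies transversally. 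Indeed, were this geometric claim true it would prove that $\Wwu$ is Lipschitz for \emph{every} $f\in\U$, contradicting the other implication. The paper instead iterates \emph{forward} until the $\Wsu$-separation reaches macroscopic scale $d^{su}\ge 1$, and then bounds the ratio at that scale using compactness together with Proposition~\ref{prop_slow_to_slow}, which guarantees the needed uniformity even on plaques that are long in the $\Wwu$-direction.

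\textbf{The $(\Rightarrow)$ direction.} First, to get $|S_n(x_0,y)|\le 2\log K$ for all $n$ you need the Lipschitz constant $K$ to be uniform for the holonomies between $f^n(T_1)$ and $f^n(T_2)$, whose $\Wwu$-separation grows like $\mu_+^n$; the definition of ``Lipschitz inside $\Wu$'' only gives a constant for each fixed pair of transversals, and extracting uniformity on long plaques again requires Proposition~\ref{prop_slow_to_slow}, which you do not invoke. Second, and more seriously, the bridge ``density of $\Wwu$-leaves plus the Birkhoff ergodic theorem identifies $\log\lsu(x_0)$ with $\int\log\Dsu\,dm$'' does not work as stated: the leaf $\Wwu(x_0)$ has $m$-measure zero, so nothing forces any point on it to be Birkhoff-typical, and density of the leaf does not help because Birkhoff averages are not continuous. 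The paper avoids this entirely by arguing constructively: given two periodic points $a,b$ with $\lsu(a)\ne\lsu(b)$, it builds an explicit configuration (using density of $\Wwu(a)$ to shadow the orbit of $b$ for a controlled time) on a bounded plaque where the holonomy ratio can be made arbitrarily small, directly exhibiting the failure of the Lipschitz property.
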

\begin{lemma}
\label{lemma_Lip_AC} Foliation $\Wwu$ is Lipschitz inside $\Wu$ if
and only if $\Wwu$ is absolutely continuous inside $\Wu$.
\end{lemma}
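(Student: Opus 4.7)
The plan is to reduce both properties of $\Wwu$ inside $\Wu$ to a single property of the $\Wwu$-holonomy. Since $\Wsu$ restricted to $\Wu$ is $C^1$, I take the transversals $T_1,T_2$ to be $\Wsu$-arcs, parametrized by $\Wsu$-arclength; then the holonomy $p\colon T_1\to T_2$ is a homeomorphism of intervals, and both ``Lipschitz inside $\Wu$'' and ``absolutely continuous inside $\Wu$'' reduce to the analogous property of every such $p$. The forward direction (Lipschitz $\Rightarrow$ AC) is then immediate from 1D real analysis together with a Fubini-type argument on the $\Wu$-plaque, using the $C^1$ foliation $\Wsu$ to disintegrate the induced Lebesgue measure over an $\Wsu$-transversal.

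For the reverse implication (AC $\Rightarrow$ Lipschitz) I plan to derive an explicit \emph{continuous} formula for $J_p$ from the dynamical self-similarity of the picture. Writing $p=f^m\circ p_{-m}\circ f^{-m}$, where $p_{-m}$ is the $\Wwu$-holonomy between the contracted $\Wsu$-arcs $f^{-m}(T_1), f^{-m}(T_2)$, the chain rule in $\Wsu$-arclength gives
\begin{equation*}
J_p(z)=J_{p_{-m}}(f^{-m}(z))\cdot\prod_{j=1}^{m}\frac{\Dsu(f^{-j}(p(z)))}{\Dsu(f^{-j}(z))}.
\end{equation*}
The iterates $f^{-j}(z),f^{-j}(p(z))$ lie on a common $\Wwu$-leaf at $\dwu$-distance $O(\mu_-^{-j})$, and $\Dsu$ is Lipschitz along $\Wu$-plaques by the $C^1$ regularity of $\Wsu$ inside $\Wu$, so each factor is $1+O(\mu_-^{-j})$. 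The finite product therefore converges uniformly in $z\in T_1$, as $m\to\infty$, to a continuous function $\Phi(z)$. Granting that $J_{p_{-m}}(f^{-m}(z))\to 1$ at almost every $z$, one obtains $J_p=\Phi$ a.e., and the AC identity $p(z)=p(z_0)+\int_{z_0}^{z}\Phi(s)\,ds$ with continuous integrand upgrades $p$ to $C^1$, and in particular Lipschitz on the compact transversal $T_1$.

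The main obstacle is justifying the limit $J_{p_{-m}}(f^{-m}(z))\to 1$. The transversals $f^{-m}(T_i)$ collapse onto a common $\Wsu$-leaf at $\dwu$-rate $\mu_-^{-m}$, which makes $p_{-m}$ $C^0$-close to the identity near $f^{-m}(z)$, but Jacobian convergence is genuinely stronger. One strategy is to apply the same chain rule to $p_{-m}$ itself, yielding the self-similar relation $(J_p/\Phi)(z)=(J_{p_{-m}}/\Phi_{-m})(f^{-m}(z))$ where $\Phi_{-m}$ is the analogous continuous function for $p_{-m}$; combined with the AC hypothesis (which passes to every iterate) and compactness of $\TTT$, this should pin the ratio to $1$. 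An alternative route is to establish uniform distortion estimates for $\Wwu$-holonomies between close $\Wsu$-transversals using H\"older continuity of $E_f^{wu}$ inside $\Wu$, which would give the convergence directly.
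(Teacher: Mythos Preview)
Your chain-rule identity for $J_p$ is correct, and the uniform convergence $\Phi_m\to\Phi$ is fine. But the step you flag as the ``main obstacle'' is a genuine gap, and neither of your two suggested strategies closes it.

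Your second strategy---distortion estimates for $p_{-m}$ coming from H\"older continuity of $E^{wu}$---cannot work, because it makes no use of the absolute continuity hypothesis. If H\"older regularity alone forced $|p_{-m}(I)|/|I|\to1$ uniformly for subarcs $I\subset f^{-m}(T_1)$, then $p=f^{m}\circ p_{-m}\circ f^{-m}$ would be Lipschitz for \emph{every} $f\in\U$, contradicting Lemma~\ref{lemma_Lip_pd}. Geometrically, the classical distortion argument requires the separation of the two transversals to be small relative to their length; under $f^{-m}$ the $\Wsu$-length shrinks like $\lambda_-^{-m}$ while the $\Wwu$-separation shrinks only like $\mu_+^{-m}$, and since $\mu<\lambda$ the ratio diverges---exactly the wrong regime.

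Your first strategy---the self-similar relation $(J_p/\Phi)(z)=(J_{p_{-m}}/\Phi_{-m})(f^{-m}(z))$---is just a restatement of the chain-rule identity you already have and carries no new constraint forcing the ratio to equal $1$. Compactness of $\TTT$ does not help: the functions $J_{p_{-m}}/\Phi_{-m}$ live on arcs shrinking to a point, and there is no a~priori bound beyond what the identity itself gives.

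There is also a prior gap: your opening reduction, that absolute continuity of $\Wwu$ inside $\Wu$ in the sense of Definition~\ref{def_ac} is equivalent to every holonomy $p$ being absolutely continuous, is unjustified. The paper explicitly notes that transverse absolute continuity is the stronger notion, and its proof does not pass through it.

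The paper avoids both issues by working with the conditional densities on $\Wwu$-plaques rather than with $J_p$. The AC hypothesis is used precisely once, to produce a set $Y\subset\Pwu$ of positive $m_{wu}$-measure consisting of Birkhoff-generic points; then the averages $\tfrac1n\sum_{i<n}(f^i)_*m_Y$ converge weakly to $m$, and the Pesin--Sinai $u$-Gibbs description of such limits forces the conditional density on $\Wwu$ to equal the continuous product $\prod_{n\ge1}\Dwu(f^{-n}(x))/\Dwu(f^{-n}(y))$. Once the conditional densities are uniformly continuous, a direct comparison of the $\mu_{\Pu}$-masses of thin rectangles built over $[x,y]$ and $[p(x),p(y)]$ gives the Lipschitz bound on $p$. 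This ergodic identification of the conditionals is the ingredient your argument is missing; it is what substitutes for the unavailable limit $J_{p_{-m}}\to1$.
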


\section{Proofs}
Let us begin with a useful observation. If one needs to show that
$\Wwu$ is Lipschitz in a plaque $\Pu$ then it is sufficient to check
Lipschitz property of the holonomy map for pairs of transversals
that belong to a smooth family that foliates $\Pu$, \eg plaques of
$\Wsu$. Therefore we can always assume that the transversals are
plaques of $\Wsu$.

\begin{proof}[Proof of Lemma~\ref{lemma_Lip_pd}]
First assume that $f\in\U\backslash\V$. Then Lipschitz property of
$\Wwu$ is shown below by a standard argument that uses Livshits
Theorem.

Let $T_1$ and $T_2$ be two local leaves of $\Wsu$ in a plaque $\Pu$
and let $p\colon T_1\to T_2$ be the holonomy along $\Wwu$.

For $x,y\in T_1$ with $d^{su}(x,y)\ge 1$ the Lipschitz property
\begin{equation}
\label{Lip_prop_far} d^{su}(p(x),p(y))\le C d^{su}(x,y),\;\;
d^{su}(x,y)\ge 1,
\end{equation}
follows from compactness for uniformly bounded plaques $\Pu$. It
might happen that $f^n(x)$ and $f^n(p(x))$ are far from each other
on $\Wwu(f^n(x))$. Hence we need~(\ref{Lip_prop_far}) with uniform
$C$ not only on plaques $\Pu$ of bounded size but also on plaques
that are long in the weak unstable direction. In this
case~(\ref{Lip_prop_far}) cannot be guaranteed solely by compactness
but easily follows from Proposition~\ref{prop_slow_to_slow}.

For $x$ and $y$ close to each other we may use $\tilde d^{su}$
rather than $d^{su}$ since $\tilde d^{su}$ is given by an integral
of a continuous density. Then
$$
\frac{\tilde d^{su}(p(x),p(y))}{\tilde d^{su}(x,y)}=
\prod_{i=0}^{n-1}\frac{\Dsu(f^i(p(x)))}{\Dsu(f^i(x))}\cdot\frac{\tilde
d^{su}(f^n(p(x)),f^n(p(y)))}{\tilde d^{su}(f^n(x),f^n(y))},
$$
where $n$ is chosen so that $d^{su}(f^{n-1}(x),f^{n-1}(y))<1\le
d^{su}(f^{n}(x),f^{n}(y))$. The Lipschitz estimate follows since
according to the Livshits Theorem $\Dsu$ is cohomologous to
$\lambda$ and therefore the product term equals to
$F(f^n(x))F(f^n(p(x))(F(x)F(p(x)))^{-1}$ for some positive
continuous transfer function $F$.

Now let us take $f$ from $\V$. Specification implies that the
closure of the set $\{\lsu(x): x\;\mbox{periodic}\}$ is an interval
$[\lsu_-, \lsu_+]$. By applying Anosov Closing Lemma  it is possible
to change the Riemannian metric so that the constants $\lambda_-$
and $\lambda_+$ from Definition~\ref{def_phd} are equal to
$\lsu_-/(1+\delta)$ and $\lsu_+(1+\delta)$ correspondingly. Here
$\delta$ is an arbitrarily small number.

Next we choose periodic points $a$ and $b$ such that
$$
\max\left\{\frac{\lsu_+}{\lsu(a)},
\frac{\lsu(b)}{\lsu_-}\right\}\le1+\delta \;\;\;  \mbox{and}\;\;\;
\frac{\lsu(b)}{\lsu(a)}\le\frac{1}{(1+\delta)^{2/\gamma}}.
$$
This is possible if $\delta$ is small enough. From now on $\delta$
will be fixed. Constant $\gamma$ does not depend on our choice of
$a$ and $b$ and hence $\delta$. It will be introduced later.

Denote by $n_0$ the least common period of $a$ and $b$. Take $\tilde
a\in\Wsu(a)$ such that $\dsu(a,\tilde a)=1$. If one considers an arc
of a leaf of $W_L^{wu}$ of length $D$ then it is easy to see that
this arc is $const/\sqrt D$-dense in $\TTT$. Since conjugacy $h_f$
between $f$ and $L$ is H\"older continuous,
Proposition~\ref{prop_slow_to_slow} implies that an arc of $\Wwu(a)$
of length $D$ is $C_1/D^\alpha$-dense in $\TTT$, $\alpha>0$. It
follows that there exists a point $c\in\Wwu(a)$ such that
$\dwu(a,c)\le D$, $d(c,b)\le C_1/D^\alpha$ and $W^s(b)$ intersects
the arc of strong unstable leaf $\Wsu(c)$ that connects $c$ and
$\tilde c=\Wsu(c)\cap\Wwu(\tilde a)$ at point $\tilde b$ as shown on
the Figure~\ref{fig_hold}.


\begin{figure}[htbp]
\begin{center}

\begin{picture}(0,0)%
\includegraphics{Hold_hol.pstex}%
\end{picture}%
\setlength{\unitlength}{3947sp}%
\begingroup\makeatletter\ifx\SetFigFont\undefined%
\gdef\SetFigFont#1#2#3#4#5{%
  \reset@font\fontsize{#1}{#2pt}%
  \fontfamily{#3}\fontseries{#4}\fontshape{#5}%
  \selectfont}%
\fi\endgroup%
\begin{picture}(5574,3333)(739,218)
\put(2713,2039){\makebox(0,0)[lb]{\smash{{\SetFigFont{8}{9.6}{\rmdefault}{\mddefault}{\updefault}{\color[rgb]{0,0,0}$\mathcal W^{wu}(a)$}%
}}}}
\put(1617,281){\makebox(0,0)[lb]{\smash{{\SetFigFont{8}{9.6}{\rmdefault}{\mddefault}{\updefault}{\color[rgb]{0,0,0}$\tilde a$}%
}}}}
\put(2177,434){\makebox(0,0)[lb]{\smash{{\SetFigFont{8}{9.6}{\rmdefault}{\mddefault}{\updefault}{\color[rgb]{0,0,0}$\mathcal W^{su}(a)$}%
}}}}
\put(3450,1452){\makebox(0,0)[lb]{\smash{{\SetFigFont{8}{9.6}{\rmdefault}{\mddefault}{\updefault}{\color[rgb]{0,0,0}$\mathcal W^{wu}(\tilde a)$}%
}}}}
\put(5843,3183){\makebox(0,0)[lb]{\smash{{\SetFigFont{8}{9.6}{\rmdefault}{\mddefault}{\updefault}{\color[rgb]{0,0,0}$\tilde c$}%
}}}}
\put(5181,2419){\makebox(0,0)[lb]{\smash{{\SetFigFont{8}{9.6}{\rmdefault}{\mddefault}{\updefault}{\color[rgb]{0,0,0}$W^s(b)$}%
}}}}
\put(4681,3437){\makebox(0,0)[lb]{\smash{{\SetFigFont{8}{9.6}{\rmdefault}{\mddefault}{\updefault}{\color[rgb]{0,0,0}$c$}%
}}}}
\put(5079,3407){\makebox(0,0)[lb]{\smash{{\SetFigFont{8}{9.6}{\rmdefault}{\mddefault}{\updefault}{\color[rgb]{0,0,0}$\tilde b$}%
}}}}
\put(4799,2877){\makebox(0,0)[lb]{\smash{{\SetFigFont{8}{9.6}{\rmdefault}{\mddefault}{\updefault}{\color[rgb]{0,0,0}$b$}%
}}}}
\put(949,535){\makebox(0,0)[lb]{\smash{{\SetFigFont{8}{9.6}{\rmdefault}{\mddefault}{\updefault}{\color[rgb]{0,0,0}$a$}%
}}}}
\end{picture}%

\end{center}
\caption{}\label{fig_hold}
\end{figure}


Take $N$ such that $\dwu(a,f^{-n_0N}(c))\le
1<\dwu(a,f^{-n_0(N-1)}(c))$. Now our goal is to show that the ratio
$$
\frac{\tdsu(a,f^{-n_0N}(\tilde
a))}{\tdsu(f^{-n_0N}(c),f^{-n_0N}(\tilde c))}
$$
can be arbitrarily small which would imply that $\Wsu$ is not
Lipschitz. Note that we cannot take a smaller $N$ since
$f^{-1}$-orbit of $c$ has to come to a local plaque about $a$.
\begin{remark} We use $\tdsu$ for convenience. Somewhat messier
estimates go through if one uses $\dsu$ directly.
\end{remark}

 To estimate the denominator we split the
orbit $\{c, f^{-1}(c),\ldots f^{-n_0N}(c)\}$ into two segments of
lengths $N_1$ and $N_2$, $N_1+N_2=n_0N$. Choose $N_1$ so that
$d(f^{-N_1}(b),f^{-N_1}(c))$ is still small enough to provide the
estimate on the strong unstable derivative
$$
\Dsu(f^{-i}(c))\le(1+\delta)\lsu(b), \;i=1,\ldots\; N_1+1.
$$
The remaining derivatives will be estimated boldly
$$
\Dsu(\cdot)\le\lambda_+.
$$
Since $b$ and $\tilde b$ are exponentially close --- $d^s(b,\tilde
b)\le C_1/D^\alpha\le const\cdot\mu_-^{-n_0N}$ --- we see that there
exists $\beta=\beta(\alpha,\nu_-,\mu_-)$ which is independent of $N$
such that $N_1>\beta N_2$.

Proposition~\ref{prop_slow_to_slow} implies that the ratio
$\tdsu(a,\tilde a)/\tdsu(c,\tilde c)$ is bounded independently of
$D$ (and $N$) by a constant $C_2$. We are ready to proceed with the
main estimate.
\begin{multline*}
\frac{\tdsu(a,f^{-n_0N}(\tilde
a))}{\tdsu(f^{-n_0N}(c),f^{-n_0N}(\tilde c))}=
\prod_{i=1}^{n_0N+1}\frac{\Dsu(f^i(c))}{(\lsu)^{n_0N}}\cdot\frac{\tdsu(a,\tilde
a)}{\tdsu(c,\tilde c)}\\
\le(\lsu(a))^{-n_0N}(1+\delta)^{N_1}(\lsu(b))^{N_1}(1+\delta)^{N_2}(\lsu_+)^{N_2}C_2\\
\le(1+\delta)^{N_1+N_2}\left(\frac{\lsu(b)}{\lsu(a)}\right)^{N_1}\left(\frac{\lsu_+}{\lsu(a)}\right)^{N_2}C_2\\
\le(1+\delta)^{N_1+2N_2}\left(\frac{\lsu(b)}{\lsu(a)}\right)^{\gamma(N_1+2N_2)}C_2
\le\left(\frac{1}{1+\delta}\right)^{n_0N+N_2}C_2,
\end{multline*}
where $\gamma=\beta/\beta+2$ so that $N_1\ge\gamma(N_1+2N_2)$. The
last expression goes to zero as $D\to\infty, N\to\infty$. Thus
$\Wwu$ is not Lipschitz.
\end{proof}

\begin{proof}[Proof of Lemma~\ref{lemma_Lip_AC}] Obviously $\Wwu$ being
Lipschitz implies transverse absolute continuity property and hence
absolute continuity. We have to establish the other implication.

Assume that $\Wwu$ is absolutely continuous in the sense of
Definition~\ref{def_ac}. A priori, conditional densities are only
$L^1$-functions. Our goal is to show that the densities are
continuous. Moreover, for $m$ almost every $x$ the density
$\rho_x(y)$ on a plaque $\Pwu$ satisfies the equation
\begin{equation}
\label{product_formula}
\frac{\rho_x(y)}{\rho_x(x)}=\prod_{n\ge1}\frac{\Dwu(f^{-n}(x))}{\Dwu(f^{-n}(y))},
\end{equation}
where $\Dwu(z)=\|Df\big|_{E_f^{wu}(z)}(z)\|$. The expression on the
right hand side of the formula is a positive continuous function in
$y$.

Consider a full volume set where positive ergodic averages coincide
for all continuous functions. By absolute continuity this set should
intersect a plaque $\Pwu$ by a positive leaf-volume $m_{wu}$ set
$Y$. Denote by $m_Y$ restriction of $m_{wu}$ to $Y$. For any $y\in
Y$ consider measures
$$
\Delta_n(y)=\frac1n\sum_{i=0}^{n-1}\delta_{f^i(y)},\;
\mu_n=\int_Y\Delta_n(y)dm_{wu}(y).
$$
Sequences $\{\Delta_n(y)\}$, $y\in Y$, converge weakly to $m$. Hence
$\mu_n$ converges to $m$ as well. Notice that
$$
\mu_n=\frac1n\sum_{i=0}^{n-1}\int_Y\delta_{f^i(y)}dm_{wu}(y)=\frac1n\sum_{i=0}^{n-1}(f^i)_*(m_Y).
$$
In case when $Y$ is a plaque of $\Wwu$ the latter expression is
known to converge to a measure with absolutely continuous
conditional densities on $\Wwu$ that
satisfy~(\ref{product_formula}). This was established in~\cite{PS}
in the context of $u$-Gibbs measures, however the proof works
equally well for any uniformly expanding foliation such as $\Wwu$.
For arbitrary measurable $Y$ same conclusion holds. One needs to use
Lebesgue density argument to reduce the problem to the case when $Y$
is a finite union of plaques.

\begin{remark}
The argument presented above can also be found in~\cite{BDV},
Section~11.2.2, in the context of $u$-Gibbs measures.
\end{remark}

Take an $m$-typical plaque $\Pu$ whose boundaries are leaves of
$\Wwu$ and transversals $T_1$ and $T_2$ as shown on the
Figure~\ref{fig_lip}. Then plaque $\Pu$ is foliated by the plaques
$\Pwu(x)$, $x\in T_1$. As usual, denote by $p\colon T_1\to T_2$ the
holonomy map. Lipschitz property of $p$ will be established  by
comparing volumes of small rectangles $R_1$ and $R_2$ built on
corresponding segments of $T_1$ and $T_2$.

\begin{figure}[htbp]

\begin{center}
\begin{picture}(0,0)%
\includegraphics{Lip_hol.pstex}%
\end{picture}%
\setlength{\unitlength}{3947sp}%
\begingroup\makeatletter\ifx\SetFigFont\undefined%
\gdef\SetFigFont#1#2#3#4#5{%
  \reset@font\fontsize{#1}{#2pt}%
  \fontfamily{#3}\fontseries{#4}\fontshape{#5}%
  \selectfont}%
\fi\endgroup%
\begin{picture}(3416,2419)(289,-1859)
\put(2251,119){\makebox(0,0)[lb]{\smash{{\SetFigFont{12}{14.4}{\rmdefault}{\mddefault}{\updefault}{\color[rgb]{0,0,0}$R_2$}%
}}}}
\put(676,-1786){\makebox(0,0)[lb]{\smash{{\SetFigFont{12}{14.4}{\rmdefault}{\mddefault}{\updefault}{\color[rgb]{0,0,0}$T_1$}%
}}}}
\put(751,389){\makebox(0,0)[lb]{\smash{{\SetFigFont{12}{14.4}{\rmdefault}{\mddefault}{\updefault}{\color[rgb]{0,0,0}$T_2$}%
}}}}
\put(601,-811){\makebox(0,0)[lb]{\smash{{\SetFigFont{12}{14.4}{\rmdefault}{\mddefault}{\updefault}{\color[rgb]{0,0,0}$\EuScript P^u$}%
}}}}
\put(1426,389){\makebox(0,0)[lb]{\smash{{\SetFigFont{12}{14.4}{\rmdefault}{\mddefault}{\updefault}{\color[rgb]{0,0,0}$p(x)$}%
}}}}
\put(2101,389){\makebox(0,0)[lb]{\smash{{\SetFigFont{12}{14.4}{\rmdefault}{\mddefault}{\updefault}{\color[rgb]{0,0,0}$p(y)$}%
}}}}
\put(1576,-1753){\makebox(0,0)[lb]{\smash{{\SetFigFont{12}{14.4}{\rmdefault}{\mddefault}{\updefault}{\color[rgb]{0,0,0}$x$}%
}}}}
\put(2401,-1753){\makebox(0,0)[lb]{\smash{{\SetFigFont{12}{14.4}{\rmdefault}{\mddefault}{\updefault}{\color[rgb]{0,0,0}$y$}%
}}}}
\put(2551,-1513){\makebox(0,0)[lb]{\smash{{\SetFigFont{12}{14.4}{\rmdefault}{\mddefault}{\updefault}{\color[rgb]{0,0,0}$R_1$}%
}}}}
\end{picture}%

\end{center}
\caption{}\label{fig_lip}
\end{figure}

Denote by $\mu_\Pu$ the conditional measure on $\Pu$. The
conditional densities $\rho_x(\cdot)$ of $m$ on the plaques
$\Pwu(x), x\in T_1$, are the same as conditional densities with
respect to $\mu_\Pu$.

Fix $x, y\in T_1$ and small $\varepsilon>0$, $\varepsilon\ll
m_{T_1}([x,y])$. Build rectangles $R_1$ and $R_2$ on the segments
$[x,y]$ and $[p(x),p(y)]$ so that
$m_{\Pwu(z)}(R_1\cap\Pwu(z))=m_{\Pwu(z)}(R_2\cap\Pwu(z))=\varepsilon$
for every $z\in[x,y]$. Then
$$
\mu_\Pu(R_i)=\int_{[x,y]}d\hat\mu(z)\int_{\Pwu(z)\cap
R_i}\rho_z(t)dm_{\Pwu(z)}(t),\;\; i=1,2,
$$
where $\hat\mu$ is projection of $\mu_\Pu$ to $T_1$. These formulae
together with uniform continuity of the conditional densities that
is guaranteed by~(\ref{product_formula}) imply that the ratio
$\mu_\Pu(R_1)/\mu_\Pu(R_2)$ is bounded away from zero and infinity
uniformly in $x$ and $y$. Since $\mu_\Pu$ has positive continuous
density with respect to $m_\Pu$ the same conclusion holds for
$m_\Pu(R_1)/m_\Pu(R_2)$ and therefore also for
$m_{T_1}([x,y])/m_{T_2}([p(x),p(y)])$.
\end{proof}

\section{Appendix}
Appendix is devoted to the proofs of
Propositions~\ref{prop_slow_to_slow} and~\ref{prop1}. Both proofs
rely on simple growth arguments and a result of Brin, Burago and
Ivanov. We will work on the universal cover $\mathbb R^3$ and we
will indicate this by using tilde sign for lifted objects. For
example, the lift of foliation $\Wsu_f$ to $\mathbb R^3$ is denoted
by $\tWsuf$.

The result of Brin, Burago and Ivanov~\cite{BBI} says that lifts of
leaves of strong unstable foliation are quasi-isometric. Namely, if
$d$ is the usual distance then
$$
\exists C>0:\;\forall x,y\;\mbox{with}\; y\in{\tilde {\mathcal
W}}^{su}(x),\;\; \dsu(x,y)\le Cd(x,y).
$$

\begin{proof}[Proof of Proposition~\ref{prop_slow_to_slow}]
We argue by contradiction. Assume that $\tilde
h_f(\tWwuf)\neq\tWwuL$. Then we can find points $a$, $b$ and $c$
with the following properties
$$
b\in\tWwuL(a), c\notin\tWwuL(a), h_f^{-1}(c)=\tWwuf(\tilde
h_f^{-1}(a))\cap\tWsuf(\tilde h_f^{-1}(b)).
$$
We iterate automorphism $L$ and look at the asymptotic growth of the
distance between these points. Obviously, distance between images of
$a$ and $b$ grows as $\mu^n$, meanwhile distance between images of
$a$ and $c$, and images of $b$ and $c$ grows as $\lambda^n$.

Since conjugacy $\tilde h_f$ is $C^0$-close to $Id$ we have the same
growth rates for the triple $\tilde h_f^{-1}(a)$, $\tilde
h_f^{-1}(b)$ and $\tilde h_f^{-1}(c)$ as we iterate dynamics $\tilde
f$. Points $\tilde h_f^{-1}(a)$ and $\tilde h_f^{-1}(c)$ lie on the
same weak unstable manifold, therefore, constant $\mu_+$ from the
Definition~\ref{def_phd_usual} is not less than $\lambda$. Then,
obviously, $\lambda_->\lambda$. Since $\tWsuf$ is quasi-isometric
$$d(\tilde f^n(\tilde h_f^{-1}(c)),\tilde f^n(\tilde
h_f^{-1}(b)))\approx\dsu(\tilde f^n(\tilde h_f^{-1}(c)),\tilde
f^n(\tilde h_f^{-1}(b)))\gtrsim\lambda_-^n,\; n\to\infty.
$$
On the other hand, we have already established that the distance
between images of $\tilde h_f^{-1}(c)$ and $\tilde h_f^{-1}(b)$
diverges as $\lambda^n$. This gives us a contradiction.
\end{proof}

\begin{proof}[Proof of Proposition~\ref{prop1}]
We argue by contradiction. Assume that $f\in\U\backslash\V$. Then
for every periodic point $x$, $\lsu(x)=\bar\lambda\neq\lambda$.

First assume that $\bar\lambda<\lambda$. Then constant $\lambda_+$
from Definition~\ref{def_phd_usual} can be taken to be equal to
$\frac12(\lambda+\bar\lambda)$.  Pick points $a$ and $b$,
$b\in\tWsu(a)$. Then
$$
d(\tilde f^n(a),\tilde f^n(b))\le\dsu(\tilde f^n(a),\tilde
f^n(b))\lesssim\lambda_+^n,\; n\to\infty.
$$
By Proposition~\ref{prop_slow_to_slow} $\tilde
h_f(b)\notin\tWwu(\tilde h_f(a)$. Therefore,
$$
d(\tilde L^n(\tilde h_f(a)),\tilde L^n(\tilde
h_f(b)))\gtrsim\lambda^n,\;n\to\infty.
$$
On the other hand,
$$
d(\tilde L^n(\tilde h_f(a)),\tilde L^n(\tilde h_f(b)))= d(\tilde
h_f(\tilde f^n(a)),\tilde h_f(\tilde
f^n(b)))\lesssim\lambda_+^n,\;n\to\infty,
$$
since $\tilde h_f$ is $C^0$-close to $Id$. The last two asymptotic
inequalities contradict to each other.

Now let us assume that $\bar\lambda>\lambda$. In this case we can
take $\lambda_-$ from Definition~\ref{def_phd_usual} to be equal to
$\frac12(\lambda+\bar\lambda)$. Take $a$ and $b$ as before. Since
$\tWsu$ is quasi-isometric
$$
d(\tilde f^n(a),\tilde f^n(b))\gtrsim\dsu(\tilde f^n(a),\tilde
f^n(b))\gtrsim\lambda_-^n,\;n\to\infty.
$$
On the other hand,
$$
d(\tilde f^n(a),\tilde f^n(b))\approx d(\tilde h_f(\tilde
f^n(a)),\tilde h_f(\tilde f^n(b)))=d(\tilde L^n(\tilde
h_f(a)),\tilde L^n(\tilde h_f(b)))\lesssim\lambda^n,\;n\to\infty,
$$
which gives us a contradiction in this case as well.
\end{proof}

\end{document}